\theoremstyle{plain}
\newtheorem{thm}{Theorem}[section]
\newtheorem{lem}{Lemma}[section]
\newtheorem{ass}{Assumption}[section]
\theoremstyle{remark}
\theoremstyle{definition}
\newtheorem{defn}{Definition}[section]
\newtheorem{rem}{Remark}[section]
\newcommand{\Complex}{\mathbb C}
\newcommand{\Real}{\mathbb R}
\newcommand{\ddbar}{\overline\partial}
\newcommand{\pr}{\partial}
\newcommand{\ol}{\overline}
\newcommand{\abs}[1]{\left\vert#1\right\vert}
\newcommand{\set}[1]{\left\{#1\right\}}
\newcommand{\To}{\rightarrow}
\title{Commutativity of quantization with conic reduction for torus actions on compact CR manifolds}
\author{Andrea Galasso\footnote{\noindent{\bf Address:} Dipartimento di Matematica e Applicazioni, Universit\`a degli Studi di Milano-Bicocca, Via R.	Cozzi 55, 20125 Milano, Italy; \\ {\bf ORCID iD:} 0000-0002-5792-1674; {\bf e-mail}: andrea.galasso@unimib.it andrea.galasso.91@gmail.com \\  {\bf Scholarship: } Titolare di una borsa per l'estero dell'Istituto Nazionale di Alta Matematica}}
\date{}
\begin{document}
	\maketitle

\begin{abstract} 
	We define conic reductions $X^{\mathrm{red}}_{\nu}$ for torus actions on the boundary $X$ of a strictly pseudo-convex domain and for a given weight $\nu$ labeling a unitary irreducible representation. There is a natural residual circle action on $X^{\mathrm{red}}_{\nu}$. We have two natural decompositions of the corresponding Hardy spaces $H(X)$ and $H(X^{\mathrm{red}}_{\nu})$. The first one is given by the ladder of isotypes $H(X)_{k\nu}$, $k\in\mathbb{Z}$, the second one is given by the $k$-th Fourier components $H(X^{\mathrm{red}}_{\nu})_k$ induced by the residual circle action. The aim of this paper is to prove that they are isomorphic for $k$ sufficiently large. The result is given for spaces of $(0,q)$-forms with $L^2$-coefficient when $X$ is a CR manifold with non-degenerate Levi form.  
\end{abstract}
\tableofcontents
	
\bigskip
\textbf{Keywords:} Hardy space, torus action, CR conic reduction

\textbf{Mathematics Subject Classification:} 32A2

\textbf{Data Availability Statements:} Data sharing not applicable to this article as no datasets were generated or analyzed during the current study.

\section{Introduction}

Let $X$ be the boundary of a strictly pseudo-convex domain $D$ in $\mathbb{C}^{n+1}$. Then $(X,\,T^{1,0}X)$ is a contact manifold of dimension $2n+1,\, n\geq 1$, where $T^{1,0}X$ is the sub-bundle of $TX\otimes \mathbb{C}$ defining the CR structure. We denote by $\omega_0\in \mathcal{C}^{\infty}(X,T^*X)$ the contact $1$-form whose kernel is the horizontal bundle $HX\subset TX$, we refer to Section~\ref{sec:Geom} for definitions. Associated with this data we can define the Hardy space $H(X)$, it is the space of boundary values of holomorphic functions in $D$ which lie in $L^2(X)$, the Hilbert space of square integrable functions on $X$. Suppose a contact and CR action of a $t$-dimensional torus $\mathbb{T}$ is given; we denote by $\mu: X\rightarrow \mathfrak{t}^*$ the associated CR moment map. Fix a weight $i\,\nu$ in the lattice $i\,\mathbb{Z}^t\subset \mathfrak{t}^*$, if $0\in \mathfrak{t}^*$ does not lie in the image of the moment map the isotypes
\[H(X)_{k\nu}=\{f\in H(X)\,:\, (e^{i\, \theta}\cdot f)(x) = e^{ik\,\langle \nu,\theta\rangle}f(x),\, \theta \in \mathbb{R}^t  \}, \quad k\in\mathbb{Z}, \]
are finite dimensional.

Suppose that the ray $i\,\mathbb{R}_+\cdot \nu\in \mathfrak{t}^*$ is transversal to $\mu$, then $X_{\nu}:=\mu^{-1}(i\,\mathbb{R}_+\cdot \nu)$ is a sub-manifold of $X$ of codimension $t-1$. There is a well-defined locally free action of $\mathbb{T}^{t-1}_{\nu}:=\exp_{\mathbb{T}}(i\,\ker\nu)$ on $X_{\nu}$, the resulting orbifold $X^{\mathrm{red}}_{\nu}$ is called \textit{conic reduction} of $X$ with respect to the weight $\nu$. Let $\varphi$ be an Euclidean product on $\mathfrak{t}$, we shall also use the symbol $\langle\cdot\,,\,\cdot \rangle$, and denote by ${\lambda}^{\varphi}\in \mathfrak{t}$ be uniquely determined by ${\lambda}=\varphi({\lambda}^{\varphi},\,\cdot)$ and $\lVert {\lambda}\rVert$ the corresponding norm. By abuse of notation we write $\lambda$ for ${\lambda}^{\varphi}$ and we identify $\mathfrak{t}\cong i \mathbb{R}^t$ with its dual. We set
\[\ker \nu= \nu^{\perp}:=\left\{{\lambda}\in \mathfrak{t}\,:\,\langle \nu,\, {\lambda} \rangle=0 \right\}\,.\]

The locus $X_{\nu}$ is $\mathbb{T}$-invariant, we will always assume that the action of $\mathbb{T}$ on $X_{\nu}$ is locally free. After replacing $\mathbb{T}$ with its quotient by a finite subgroup, we may and will assume without loss of generality that the action is generically free. In Section~\ref{sec:Geom}, we show that $X^{\mathrm{red}}_{\nu}$ is CR manifold with positive definite Levi form of dimension $2n-2t+3$. 

Let us define $\mathbb{T}^1_{\nu}:=\exp_{\mathbb{T}}(i\,\nu)$, if $\nu$ is coprime, we have a Lie group isomorphism
\[\kappa_{\nu}\,:\,S^1\rightarrow \mathbb{T}^1_{\nu},\quad e^{i\theta}\mapsto e^{i\theta\nu}\]
between $\mathbb{T}^1_{\nu}:=\exp_{\mathbb{T}}(i\,\nu)$ and the circle $S^1$. Let us denote by $$\overline{\mathbb{T}^1_{\nu}}:=\mathbb{T}/ \mathbb{T}^{t-1}_{\nu}\cong \mathbb{T}^1_{\nu}/ (\mathbb{T}^1_{\nu}\cap \mathbb{T}^{t-1}_{\nu})\,,$$ then the character $\chi_{\nu}\,:\,\mathbb{T}\rightarrow S^1,\,\chi_{\nu}(e^{i\,\theta}):=e^{ik\,\langle \nu,\,\theta\rangle}$, being trivial on $\mathbb{T}^{t-1}_{\nu}$, descends to a character $\chi_{\nu}'\,:\,\overline{\mathbb{T}^1_{\nu}}\rightarrow S^1$ which is a Lie group isomorphism, see \cite[Lemma 10]{Pao2}.
Thus, we have a locally free circle action of $\overline{\mathbb{T}^1_{\nu}}$ on $X^{\mathrm{red}}_{\nu}$, which induces an action on the Hardy space $H(X^{\mathrm{red}}_{\nu})$. Suppose that the action of $\overline{\mathbb{T}^1_{\nu}}$ on $X$ is transversal to the CR structure. We denote by  $H(X^{\mathrm{red}}_{\nu})_k$ the corresponding $k$-th Fourier component and we call the action of $\overline{\mathbb{T}^1_{\nu}}$ on $X^{\mathrm{red}}_{\nu}$ \textit{residual circle action}. The aim of this paper is to prove that $H(X)_{k\nu}$ and $H(X^{\mathrm{red}}_{\nu})_k$ are isomorphic for $k$ sufficiently large.


We prove the aforementioned result in the more general setting of CR manifolds for spaces of $(0,q)$-forms when $k$ is large, more precisely we  consider $(0,q)$ forms with $L^2$ coefficients and the corresponding projector $S^{(q)}$ onto the kernel of the Kohn Laplacian $H^q(X)$. Now, we make more precise the assumptions on the CR manifold $X$ and on the group action.

\begin{ass}
	Let $(X,\, T^{1,0}X)$ be a compact connected orientable CR manifold of dimension $2n+1$, $n\geq 1$, and let $\omega$ be the associated contact $1$-form. The Levi form $L$ is non-degenerate of constant signature $(n_-,\,n_+)$ on $X$. That is, the Levi form has exactly $n_-$ negative and $n_+$ positive eigenvalues at each point of $X$, where $n_-+n_+=n$.
\end{ass}  

Concerning the group action, we always assume 

\begin{ass} \label{as:2}
	The action of $\mathbb{T}$ preserves the contact form $\omega_0$ and the complex structure $J$. That is, $g^*\omega_0=\omega_0$ on $X$ and $g_*J=Jg_*$ on the horizontal bundle $HX$ for every $g\in \mathbb{T}$ where $g^*$ and $g_*$ denote the pull-back map and push-forward map of $\mathbb{T}$, respectively.	
\end{ass}

Let $X^{\mathrm{red}}_{\nu}:=X_{\nu}/\mathbb{T}^{t-1}_{\nu}$, defined in the same way as before, more precisely we shall assume

\begin{ass} \label{as:3}
	The moment map $\mu$ is transverse to the ray $i\,\mathbb{R}_+\cdot \nu\in \mathfrak{t}^*$, the action of $\mathbb{T}$ on $X_{\nu}$ is locally free and for every $x\in X_{\nu}$
	\[\mathrm{val}_x(\nu^{\perp})\cap \mathrm{val}_x(\nu^{\perp})^{\perp_b}=\{0\}  \]
	where $b$ is a bilinear form on $H_xX$ such that 
	\begin{equation}\label{E:biform}
		b(\cdot \,,\, \cdot) = \mathrm{d}\omega_0(\cdot \,,\, J\cdot) 
	\end{equation}
	and it is non-degenerate.	
\end{ass}  

We note that $b(U,\,V)=2\,L(U,\,V)$ for every $U,\,V\in HX$. By Assumptions above, we will show that $X^{\mathrm{red}}_{\nu}$ is a CR manifold with natural CR structure induced by $T^{1,0}X$ of dimension $2n - 2(t-1) + 1$. Let $L_{X^{\mathrm{red}}_{\nu}}$ be the Levi form on $X^{\mathrm{red}}_{\nu}$ induced naturally from the Levi form $L$ on $X$. For a given subspace $\mathfrak{s}$ of $\mathfrak{t}$, we denote $\mathfrak{s}_X$ the subspace of infinitesimal vector fields on $X$. Let us consider 
\[B=\ker{\nu}_X\oplus J\ker{\nu}_X\,.\]
Hence, $b$ has constant signature on $B\times B$, suppose $b$ has $r$ negative eigenvalues on $B\times B$ where $r \leq n_-$ since $L$ and $b$ have the same number of negative eigenvalues on $HX$. Fix $q=n_-$, hence by Lemma \ref{lem:2.1}, $L_{X^{\mathrm{red}}_{\nu}}$ has $q-r$ negative eigenvalues at each point of ${X^{\mathrm{red}}_{\nu}}$. We refer to Section~\ref{sec:Geom} for definitions, we have:

\begin{thm} \label{thm:forms}
	Suppose that $\Box^q_b$ has $L^2$ closed range. Fix a maximal coprime weight $\nu\neq 0$ in the lattice inside $\mathfrak{t}^*$ and assume that the circle action $\overline{\mathbb{T}^1_{\nu}}$ is a transversal CR action. Fix $q=n_-$, under the assumptions above, $X^{\mathrm{red}}_{\nu}$ is a compact CR manifold with non-degenerate Levi form having $q-r$-negative eigenvalues. There is a natural isomorphism of vector spaces $\sigma_k\,:\, H^q(X)_{k\nu}\rightarrow H^{q-r}(X^{\mathrm{red}}_{\nu})_k$ for $k$ sufficiently large.
\end{thm}

For strictly pseudoconvex domain we have $q=n_-=r=0$ and thus we have quantization commutes with reduction for spaces of functions for $k$ large. We give a proof in Section \ref{sec:forms}, which is inspired from \cite{hsiaohuang} (see \cite{hhm} for the full extension of \cite{hsiaohuang}) and it is a consequence of the microlocal properties of the projector $S^{(q)}_{k\nu}$ described in Section \ref{sec:Hardy} and calculus of Fourier integral operators of complex type, see \cite{ms}. Furthermore, we recall that the way to establish the isometry from kernel expansion for $k$ large comes from \cite{mz}.

The conic reductions defined above appear naturally in geometric quantization. In fact, given a Hamiltonian and holomorphic action with moment map $\Phi$ of a compact Lie Group $G$ on a Hodge manifold $(M,\omega)$ with quantizing circle bundle $\pi:X\rightarrow M$, one can always define an infinitesimal action of the Lie algebra $\mathfrak{g}$ on $X$. If it can be integrated to an action of the whole group $G$, then one has a representation of $G$ on $H(X)$. In~\cite{gs2} it was observed that associated to group actions one can define reductions $M_{\mathrm{red}}^{\Theta}$ by pulling back a $G$-invariant and proper sub-manifold $\Theta$ of $\mathfrak{g}^*$ via the moment map $\Phi$. When $\Theta$ is chosen to be a cone through a co-adjoint orbit $C(\mathcal{O}_{\nu})$ one has associated reduction whose Hardy space of its quantization is
$$H(X_{\mathrm{red}}^{C(\mathcal{O}_{\nu})})= \bigoplus_{k\in \mathbb{Z}} H(X_{\mathrm{red}}^{C(\mathcal{O}_{\nu})})_k\,,$$
where $k$ labels an irreducible representation of the residual circle action described above. Now, it is natural to ask if this spaces are related with the decomposition induced by $G$ on $H(X)$. When $G=\mathbb{T}$ Theorem~\ref{thm:forms} states that they are isomorphic to $H(X)_{k\nu}$; the canonical decomposition of the Hardy space $H(X)$ of the quantitation by the built-in circle action does not play any role. The semi-classical parameter $k$ is the one induced by the ladder $k\,\nu$, $k=0,1,2,\dots$, labeling unitary irreducible representations. 

Further geometrical motivations for this theorem are explained in paper~\cite{Pao2}, where it is proved that $\delta_k$ is an isomorphism for $k$ large enough in the setting when $X$ is the circle bundle of a polarized Hodge manifold whose Grauert tube is $D$. Thus, Theorem~\ref{thm:forms} generalizes the main theorem in~\cite{Pao2} to compact ``quantizable'' pseudo-K\"ahler manifolds. We also refer to~\cite{Pao1} for examples and the explicit expression of the leading term of the asymptotic expansion of $\dim H(X)_{k\nu}$ as $k$ goes to infinity. Along this line of research in \cite{circle} Toeplitz operators were studied for circle action, in \cite{g} the study of asymptotics of compositions of Toeplitz operators with quantomopomorphism is addressed for torus actions. 


\section{Preliminaries}
\subsection{Geometric setting}
\label{sec:Geom}

We recall some notations concerning CR and contact geometry. Let $(X, T^{1,0}X)$ be a compact, connected and orientable CR manifold of dimension $2n+1$, $n\geq 1$, where $T^{1,0}X$ is a CR structure of $X$. There is a unique sub-bundle $HX$ of $TX$ such that $HX\otimes \mathbb{C}=T^{1,0}X \oplus T^{0,1}X$. Let $J:HX\To HX$ be the complex structure map given by $J(u+\ol u)=i u-i\ol u$, for every $u\in T^{1,0}X$. 
By complex linear extension of $J$ to $TX\otimes \mathbb{C}$, the $i$-eigenspace of $J$ is $T^{1,0}X$. We shall also write $(X, HX, J)$ to denote a CR manifold.

Since $X$ is orientable, there always exists a real non-vanishing $1$-form $\omega_0\in\mathcal{C}^{\infty}(X,T^*X)$ so that $\langle\,\omega_0(x)\,,\,u\,\rangle=0$, for every $u\in H_xX$, for every $x\in X$; $\omega_0$ is called contact form and it naturally defines a volume form on $X$. For each $x \in X$, we define a quadratic form on $HX$ by
\[{L}_x(U,V) =\frac{1}{2}\mathrm{d}\omega_0(JU, V),\qquad \forall \ U, V \in H_xX.\]
Then, we extend ${L}$ to $HX\otimes \mathbb{C}$ by complex linear extension; for $U, V \in T^{1,0}_xX$,
\[
	{L}_x(U,\overline{V}) = \frac{1}{2}\,\mathrm{d}\omega_0(JU, \overline{V}) = -\frac{1}{2i}\,\mathrm{d}\omega_0(U,\overline{V}).
\]
The Hermitian quadratic form ${L}_x$ on $T^{1,0}_xX$ is called Levi form at $x$. In the case when $X$ is the circle bundle of an Hodge manifold $(M,\omega)$, the positivity of $\omega$ implies that the number of negative eigenvalues of the Levi form is equal to $n$. The Reeb vector field $R\in\mathcal{C}^\infty(X,TX)$ is defined to be the non-vanishing vector field determined by 
\[	\omega_0(R)\equiv 1,\quad 		\mathrm{d}\omega_0(R,\cdot)\equiv0\ \ \mbox{on $TX$}. \]

Fix a smooth Hermitian metric $\langle\, \cdot \,|\, \cdot \,\rangle$ on $\mathbb{C}TX$ so that $T^{1,0}X$ is orthogonal to $T^{0,1}X$, $\langle\, u \,|\, v \,\rangle$ is real if $u, v$ are real tangent vectors, $\langle\,R\,|\,R\,\rangle=1$ and $R$ is orthogonal to $T^{1,0}X\oplus T^{0,1}X$. For $u \in \mathbb{C}TX$, we write $|u|^2 := \langle\, u\, |\, u\, \rangle$. Denote by $T^{*1,0}X$ and $T^{*0,1}X$ the dual bundles of $T^{1,0}X$ and $T^{0,1}X$, respectively. They can be identified with sub-bundles of the complexified cotangent bundle $\mathbb{C}T^*X$.

Assume that $X$ admits an action of $t$-dimensional torus $\mathbb{T}$. In this work, we assume that the $\mathbb{T}$-action preserves $\omega_0$ and $J$; that is, $t^\ast\omega_0=\omega_0$ on $X$ and $t_\ast J=Jt_\ast$ on $HX$. Let $\mathfrak{t}$ denote the Lie algebra of $T$, we identify $\mathfrak{t}$ with its dual $\mathfrak{t}^*$ by means of the scalar product $\langle\cdot,\cdot\rangle$. For any $\xi \in \mathfrak{t}$, we write $\xi_X$ to denote the vector field on $X$ induced by $\xi$. The moment map associated to the form $\omega_0$ is the map $\mu:X \to \mathfrak{t}^*$ such that, for all $x \in X$ and $\xi \in \mathfrak{t}$, we have 
\begin{equation}\label{E:cmpm}
		\langle \mu(x), \xi \rangle = \omega_0(\xi_X(x)).
\end{equation}

Fix a maximal weight $\nu\neq 0$ in the lattice inside $\mathfrak{t}^*$. Suppose that $i\mathbb{R}_+\cdot \nu$ is transversal to $\mu$, so we have
\begin{equation}\label{eq:trans} \mathfrak{t}^*= i\mathbb{R}_+\cdot \nu \oplus  \mathrm{d}_p\mu(T_pX) 
\end{equation}
and $X_{\nu}$ is a sub-manifold of $X$ of codimension $2n+2-t$. We claim that the action of $\mathbb{T}^{t-1}_{\nu}$ on $X_{\nu}$ is locally free. In fact, by the contrary suppose that there exists $\xi \in \ker{\nu}$ such that $\xi_X(x)=0$ on $T_xX_\nu$, $x\in X_{\nu}$. For each $v\in T_xX$, we have
\[(\mathrm{d}_x\mu(v))(\xi)=\mathrm{d}_x\omega_0(\xi_X,v)=0 \]
which contradicts \eqref{eq:trans}.  

The action of $\mathbb{T}$ restricts to an action of $\mathbb{T}^{t-1}$ whose moment map is given by 
\[\mu_{\vert\mathbb{T}^{t-1}}= p_{\nu}\circ \mu  \,,\quad \text{ where }  p_{\nu}\,:\, \mathfrak{t}^* \rightarrow  \mathfrak{t}_{\nu}^*  \]
is the canonical projection onto the $t-1$-dimensional subspace $\nu^{\perp}$ in $\mathfrak{t}^*$. The transversality condition in \ref{as:2} implies that $0\in \mathfrak{t}_{\nu}^{t-1}$ is a regular value for the moment $\mu_{\vert\mathbb{T}^{t-1}}$. Thus, we have
\[X_{\nu}/\mathbb{T}^{t-1}_{\nu}=\mu_{\vert\mathbb{T}^{t-1}}^{-1}(0)/\mathbb{T}^{t-1}_{\nu}\]
and by assumptions \ref{as:2}, \ref{as:3} and \cite[Section 2.5]{hsiaohuang} (we shall also refer to \cite{gh} for definitions concerning CR structures on orbifolds) we have 
\begin{lem} \label{lem:2.1}
	The space of orbits $X_{\nu}/\mathbb{T}^{t-1}_{\nu}$ is a CR orbifold. Let us denote by $\pi : X_{\nu} \rightarrow X^{\mathrm{red}}_{\nu}$ and $\iota : X_{\nu} \hookrightarrow X$ the natural projection and inclusion, respectively, then there is a unique induced contact form $\omega_0^{\mathrm{red}}$ on $X_{\nu}/\mathbb{T}^{t-1}_{\nu}$ such that
	\[\pi^{*}\omega_0^{\mathrm{red}}=\iota^*\omega_0\,.  \]
	In particular, set $HX_{\nu}= TX_{\nu}\cap HX$, we have
	\[HX= HX_{\nu}\oplus J\,i \nu^{\perp}_X \quad \text{ and }\quad HX_{\nu} = i \nu^{\perp}_X\oplus \mathrm{d}\pi^{*}HX^{\mathrm{red}}_{\nu}\,.  \]
\end{lem}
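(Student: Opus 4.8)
The plan is to produce the reduced data in three layers --- descend the contact form, split $HX$ along $X_\nu$, and transport the complex structure --- using as the single nontrivial geometric input the non-degeneracy of the bilinear form $b$ of \eqref{E:biform} on the orbit directions $\imath\nu^\perp_X$, which is exactly what the transversality \eqref{eq:trans} provides. First I would record that $\imath\nu^\perp_X\subseteq HX_\nu$ and that $\iota^*\omega_0$ is basic. For $\xi\in\imath\nu^\perp=\imath\ker\nu$ and $x\in X_\nu$ one has $\mu(x)\in\imath\mathbb{R}_+\cdot\nu$, so \eqref{E:cmpm} together with $\langle\nu,\ker\nu\rangle=0$ gives $\omega_0(\xi_X(x))=\langle\mu(x),\xi\rangle=0$; since $\xi_X$ is tangent to the invariant locus $X_\nu$ this places $\xi_X\in TX_\nu\cap HX=HX_\nu$. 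The same identity shows $\iota_{\xi_X}\iota^*\omega_0=0$, and invariance of $\omega_0$ under the action makes $\iota^*\omega_0$ a $\mathbb{T}^{t-1}_\nu$-invariant horizontal form; being basic for the principal bundle $\pi\colon X_\nu\to X^{\mathrm{red}}_\nu$, it descends to a unique $1$-form $\omega_0^{\mathrm{red}}$ with $\pi^*\omega_0^{\mathrm{red}}=\iota^*\omega_0$, whose contact (non-degeneracy) property I would verify only at the end, from the reduced Levi form.

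Next I would prove the transversal splitting $HX=HX_\nu\oplus J\imath\nu^\perp_X$ over $X_\nu$. From $\mathcal{L}_{\xi_X}\omega_0=0$ one gets $\mathrm{d}\langle\mu,\xi\rangle=-\iota_{\xi_X}\mathrm{d}\omega_0$, so $v\in T_xX_\nu$ iff $\mathrm{d}\omega_0(\xi_X,v)=0$ for all $\xi\in\imath\nu^\perp$; as $R$ lies in the kernel of $\mathrm{d}\omega_0$ it is tangent to $X_\nu$, whence $\dim HX_\nu=2n+1-t$. For $\zeta\in\imath\nu^\perp$ the form \eqref{E:biform} gives $\mathrm{d}\omega_0(\xi_X,J\zeta_X)=b(\xi_X,\zeta_X)$, so a nonzero $J\zeta_X$ lying in $HX_\nu$ would force $b(\,\cdot\,,\zeta_X)$ to vanish on $\imath\nu^\perp_X$, against the non-degeneracy of $b$ there; hence $HX_\nu\cap J\imath\nu^\perp_X=0$, and since the dimensions sum to $2n=\dim HX$ the sum is direct and fills $HX$. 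In particular $\imath\nu^\perp_X$ is totally real, i.e. $\imath\nu^\perp_X\cap J\imath\nu^\perp_X=0$.

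For the complex structure I would pass to $(1,0)$-parts. Writing $\xi_X=\xi_X^{1,0}+\overline{\xi_X^{1,0}}$, the span $\Sigma$ of the $\xi_X^{1,0}$ with $\xi\in\imath\nu^\perp$ is, by total reality, a rank $t-1$ subbundle of $T^{1,0}X|_{X_\nu}$ on which the Levi form is non-degenerate, since $b(\xi_X,\zeta_X)$ is a fixed nonzero multiple of $\mathrm{Re}\,L(\xi_X^{1,0},\overline{\zeta_X^{1,0}})$ and a Hermitian form is non-degenerate exactly when its real part is. The Levi-orthogonal complement $\Sigma^{\perp_L}\subseteq T^{1,0}X|_{X_\nu}$ then has rank $n-t+1$, and its real points $E:=\mathrm{Re}\bigl(\Sigma^{\perp_L}\oplus\overline{\Sigma^{\perp_L}}\bigr)$ form a $J$-invariant subbundle contained in $HX_\nu$ (Levi-orthogonality to $\Sigma$ forces tangency to $X_\nu$) with $E\cap\imath\nu^\perp_X=0$ (again by non-degeneracy of $L$ on $\Sigma$), giving $HX_\nu=\imath\nu^\perp_X\oplus E$ and, as $\ker\mathrm{d}\pi=\imath\nu^\perp_X$, an isomorphism $\mathrm{d}\pi\colon E\xrightarrow{\ \sim\ }HX^{\mathrm{red}}_\nu$. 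Transporting $J|_E$ through $\mathrm{d}\pi$ defines the almost CR structure of $X^{\mathrm{red}}_\nu$, well defined by $\mathbb{T}^{t-1}_\nu$-invariance of $J$ and $L$, and the restriction of $L$ to $\Sigma^{\perp_L}$ descends to the reduced Levi form $L_{X^{\mathrm{red}}_\nu}$, non-degenerate because $\Sigma^{\perp_L}$ is the $L$-orthogonal of a non-degenerate subbundle inside the non-degenerate $T^{1,0}X$; this is what makes $\omega_0^{\mathrm{red}}$ a genuine contact form.

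I expect the real obstacle to be the formal integrability of this transported structure, namely $[T^{1,0}X^{\mathrm{red}}_\nu,T^{1,0}X^{\mathrm{red}}_\nu]\subseteq T^{1,0}X^{\mathrm{red}}_\nu$. The plan is to lift local sections of $T^{1,0}X^{\mathrm{red}}_\nu$ to $\mathbb{T}^{t-1}_\nu$-invariant sections of $\Sigma^{\perp_L}$, extend these to invariant sections of $T^{1,0}X$ that are tangent to $X_\nu$ along $X_\nu$, and take their bracket. Involutivity of $T^{1,0}X$ places the bracket in $T^{1,0}X|_{X_\nu}=\Sigma\oplus\Sigma^{\perp_L}$, while tangency to $X_\nu$ places it in $\mathbb{C}TX_\nu$; since $\Sigma\cap\mathbb{C}TX_\nu=0$ --- which is precisely the transversal splitting of the second step, because $\xi_X^{1,0}\in\mathbb{C}TX_\nu$ would force $J\xi_X\in TX_\nu\cap J\imath\nu^\perp_X=0$ --- the $\Sigma$-component of the bracket vanishes, so it is a section of $\Sigma^{\perp_L}$ and descends to a section of $T^{1,0}X^{\mathrm{red}}_\nu$. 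The delicate points are choosing the invariant tangential extensions and checking that $\mathrm{d}\pi$ intertwines the two brackets; by contrast the dimension count and the descent of $\omega_0^{\mathrm{red}}$ are then routine.
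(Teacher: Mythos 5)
Your skeleton is the same as the paper's: the engine of your argument is the characterization $v\in T_xX_\nu\Leftrightarrow(\mathrm{d}_x\mu(v))(\xi)=0$ for all $\xi\in\nu^{\perp}$, equivalently $\mathrm{d}_x\omega_0(\xi_X,v)=0$, and this is exactly the identity the paper's (one-line) proof invokes, namely that $\nu^{\perp}_X$ is the $\mathrm{d}\omega_0$-orthogonal of $TX_\nu$; your descent of $\omega_0$, the dimension counts, and the integrability argument for the quotient CR structure are the details the paper leaves implicit, and they check out. There is, however, one genuine error: the claim that the non-degeneracy of $b$ on $\imath\nu^{\perp}_X$ ``is exactly what the transversality \eqref{eq:trans} provides.'' It is not. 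Unwinding \eqref{eq:trans} through the identity $(\mathrm{d}_x\mu(v))(\xi)=\mathrm{d}_x\omega_0(\xi_X,v)$ shows that transversality along $X_\nu$ says: for every nonzero $\xi\in\nu^{\perp}$ the covector $\iota_{\xi_X}\mathrm{d}\omega_0$ is nonzero on $T_xX$, which (since $\xi_X(x)\in H_xX$ and $\ker\mathrm{d}\omega_0=\mathbb{R}R$) is precisely local freeness of the $\mathbb{T}^{t-1}_{\nu}$-action --- this is how the paper itself uses \eqref{eq:trans}. It constrains $\iota_{\xi_X}\mathrm{d}\omega_0$ as a functional on \emph{all} of $T_xX$ and involves no $J$ whatsoever; degeneracy of $b$ on $\imath\nu^{\perp}_X$ is the independent condition $J\,\imath\nu^{\perp}_X\cap(\imath\nu^{\perp}_X)^{\perp_{\mathrm{d}\omega_0}}\neq 0$, which can genuinely occur under an indefinite Levi form (a $b$-null orbit direction) while transversality holds. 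Since your first splitting $HX=HX_\nu\oplus J\,\imath\nu^{\perp}_X$ is \emph{equivalent} to that non-degeneracy, the argument breaks at its first structural step in the generality you set up. The correct source of the input is different: in the setting of this Lemma and of Theorem~\ref{thm:main}, $X$ is strictly pseudoconvex, so $b$ is definite on $HX$ and its restriction to any subspace, in particular to $\imath\nu^{\perp}_X$, is automatically non-degenerate; in the merely non-degenerate setting of Theorem~\ref{thm:forms} the paper explicitly postulates this non-degeneracy (``Note that the bilinear form $b$ \dots is non-degenerate on $X_\nu$''). Replacing your appeal to transversality by an appeal to strict pseudoconvexity (or to that standing assumption) repairs the proof.

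A smaller point in the third step: ``a Hermitian form is non-degenerate exactly when its real part is'' does not apply as you use it, because $b|_{\imath\nu^{\perp}_X}$ is the restriction of $\mathrm{Re}\,L$ to the totally real slice $\{\xi_X^{1,0}:\xi\in\imath\nu^{\perp}\}$ of $\Sigma$, not the full real part on $\Sigma$ viewed as a real vector space, and the restriction of a non-degenerate Hermitian form to a totally real subspace can be degenerate (e.g.\ the Gram matrix $\bigl(\begin{smallmatrix}0 & \imath\\ -\imath & 0\end{smallmatrix}\bigr)$ has zero real part on the real span of the basis). What saves the step is orbit isotropy: for commuting infinitesimal symmetries one has $\mathrm{d}\omega_0(\xi_X,\zeta_X)=0$, which translates into $\mathrm{Im}\,L\bigl(\xi_X^{1,0},\overline{\zeta_X^{1,0}}\bigr)=0$; hence the Gram matrix of $L|_\Sigma$ in the basis $\{\xi_{j,X}^{1,0}\}$ is real symmetric and is a fixed nonzero multiple of that of $b|_{\imath\nu^{\perp}_X}$, and only then are the two non-degeneracies equivalent. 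With these two corrections your proof is complete and is a fleshed-out version of the paper's.
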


We will also assume that $\overline{\mathbb{T}^1_{\nu}}$-action is transversal CR, that is, the infinitesimal vector field
$$(\nu_X u)(x)=\frac{\partial}{\partial t}\left(u(\exp(i t\,\nu)\circ x)\right)|_{t=0}\,,\quad \text{for any }u\in C^\infty(X)\,,$$
preserves the CR structure $T^{1,0}X$, so that $\nu_X$ and $T^{1,0}X\oplus  T^{0,1}X$ generate the complex tangent bundle to $X$, $$\mathbb{C}T_xX=\mathbb{C}\nu_X(x)\oplus \mathbb{C}T_x^{1,0}X\oplus \mathbb{C}T_x^{0,1}X \qquad(x\in X)\,.$$ 

We define local coordinates that will be useful later. Recall that $X$ admits a CR and transversal $\overline{\mathbb{T}^1_{\nu}}$-action which is locally free on $X_{\nu}$, $T\in \mathcal{C}^{\infty}(X,\,TX)$ denotes the global real vector field given by this infinitesimal circle action. We will take $T$ to be our Reeb vector field $R$. In a similar way as in Theorem $3.6$ in \cite{hsiaohuang}, there exist local coordinates $v=(v_1,\dots,v_{t-1})$ of $\mathbb{T}^{t-1}_{\nu}$ in a small neighborhood $V_0$ of the identity $e$ with $v(e)=(0,\,\dots,\,0)$, local coordinates $x=(x_1\,\dots,x_{2n+1})$ defined in a neighborhood $U_1\times U_2$ of $p\in X_{\nu}$, where $U_1\subseteq \mathbb{R}^{t-1}$ (resp. $U_{2}\subseteq \mathbb{R}^{2n+2-t}$) is an open set of $0\in \mathbb{R}^{t-1}$ (resp. $0\in \mathbb{R}^{2n+2-t}$) and $p\equiv 0\in \mathbb{R}^{2n+1}$, and a smooth function $\gamma=(\gamma_1,\dots,\,\gamma_{t-1})\in \mathcal{C}^{\infty}(U_2,U_1)$ with $\gamma(0)=0$ such that
\begin{align*} &(v_1,\dots,v_{t-1})\circ (\gamma(x_{t},\dots,x_{2n+1}),x_{t},\dots,x_{2n+1}) \\
	&=(v_1+\gamma_1(x_{t},\dots,x_{2n+1}),\dots,\,v_{t-1}+\gamma_d(x_{t},\dots,x_{2n+1}),\,x_{t},\dots,x_{2n+1}) \end{align*}
for each $(v_1,\dots,\,v_{t-1})\in V_0$ and $(x_{t},\dots,x_{2n+1})\in U_2$. Furthermore, we have
\[\mathfrak{t}=\mathrm{span}\left\{\partial_{{x}_{j}}\right\}_{j=1,\dots, t-1}\,,\quad \mu^{-1}(i\,{\mathbb{R}}_{\nu}\cdot \nu)\cap U =\{x_{2d-t+1} = \dots=x_{2d}=0 \}\,, \]
on $\mu^{-1}(i\,{\mathbb{R}}_{\nu}\cdot \nu)\cap U$ there exist smooth functions $a_j$'s with $a_j(0)=0$ for every $0\leq j\leq t-1$ and independent on $x_1,\dots,x_{2(t-1)},\,x_{2n+1}$ such that
\[J\left(\partial_{{x}_{j}}\right)=\partial_{{x}_{t-1+j}}+a_j(x)\partial_{{x}_{2n+1}}\qquad j=1,\dots,t-1\,, \]
the Levi form ${L}_p$, the Hermitian metric $\langle\,\cdot\,|\,\cdot\,\rangle$ and the $1$-form $\omega_0$ can be written 
\[{L}_p(Z_j,\,\overline{Z}_k)=\mu_j\,\delta_{j,k},\qquad \langle Z_j\vert\,\overline{Z}_k \rangle=\delta_{j,k}\qquad (1\leq j,k\leq n)\,, \]
and
\begin{align*}
	\omega_0(x)=&(1+O(\lvert x\rvert))\mathrm{d}x_{2n+1}+\sum_{j=1}^{t-1} 4\mu_jx_{t-1+j}\mathrm{d}x_j +\sum_{j=t}^n 2\mu_jx_{2j}\mathrm{d}x_{2j-1} \\
	&-\sum_{j=t}^n 2\mu_jx_{2j-1}\mathrm{d}x_{2j}+\sum_{j=r}^{2n} b_jx_{2n+1}\mathrm{d}x_j+O(\lvert x\rvert^2) 
\end{align*}
where $b_{r},\dots,b_{2n}\in \mathbb{R}$,
\[T_p^{1,0}X=\mathrm{span}\{Z_1,\dots,Z_n\} \]
and
\begin{align*}Z_j&=\frac{1}{2}\,(\partial_{{x}_{j}}-i\, \partial_{{x}_{t-1+j}})(p)\qquad(j=1,\dots,t-1)\,,\\ Z_j&=\frac{1}{2}\,(\partial_{{x}_{2j-1}}-i\, \partial_{{x}_{2j}})(p)\qquad(j=t,\dots,n)\,.
\end{align*} 

We need to define in local coordinates we just introduced the phase function of the $\mathbb{T}^{t-1}_{\nu}$-invariant Szeg\H{o} kernel $\Phi_-(x,y)\in\mathcal{C}^\infty(U\times U)$ which is independent of $(x_1,\ldots,x_{t-1})$ and $(y_1,\ldots,y_{t-1})$. Hence, we write $\Phi_-(x,y)=\Phi_-((0,x''),(0,y'')):=\Phi_-(x'',y'')$ and $\mathring{x}'':=(x_{t},\ldots,x_{2n})$,  $\mathring{y}'':=(y_{t},\ldots,y_{2n})$. Moreover, there is a constant $c>0$ such that 
\begin{equation}
	{\rm Im\,}\Phi_-(x'',y'')\geq c\left(\lvert{\mathring{x}''}\rvert^2+\lvert{\mathring{y}''}\rvert^2+\lvert{\mathring{x}''-\mathring{y}''}\rvert^2\right),\ \ \text{for all } ((0,x''),(0,y''))\in U\times U.
\end{equation}
Furthermore, 
\begin{align}\label{eq:phase phi-}
		\Phi_-&(x'', y'')=-x_{2n+1}+y_{2n+1}+2i\sum^{t-1}_{j=1}\abs{\mu_j}y^2_{t-1+j}+2i\sum^{t-1}_{j=1}\abs{\mu_j}x^2_{t-1+j} +i\sum^{n}_{j=t}\abs{\mu_j}\abs{z_j-w_j}^2\\
		& +\sum^{n}_{j=t}i\mu_j(\ol z_jw_j-z_j\ol w_j)+\sum^d_{j=1}(-b_{t-1+j}x_{d+j}x_{2n+1}+b_{t-1+j}y_{t-1+j}y_{2n+1})\notag \\
		&+\sum^n_{j=t}\frac{1}{2}(b_{2j-1}-ib_{2j})(-z_jx_{2n+1}+w_jy_{2n+1}) +\sum^n_{j=t}\frac{1}{2}(b_{2j-1}+ib_{2j})(-\ol z_jx_{2n+1}+\ol w_jy_{2n+1}) \notag \\
		&+(x_{2n+1}-y_{2n+1})f(x, y) +O(\abs{(x, y)}^3)\notag,
\end{align}
where $z_j=x_{2j-1}+ix_{2j}$, $w_j=y_{2j-1}+iy_{2j}$, $j=t,\ldots,n$, $\mu_j$, $j=1,\ldots,n$, $f$ is smooth and satisfies $f(0,0)=0$, $f(x, y)=\ol f(y, x)$. 

We now consider $\overline{\mathbb{T}^1_{\nu}}$ circle action on $X$. Let $p\in\mu^{-1}(i\mathbb{R}_+\cdot \nu)$, there exist local coordinates $v=(v_1,\dots,v_{t-1})$ of $\mathbb{T}^{t-1}$ in a small neighborhood $V_0$ of $e$ with $v(e)=(0,\,\dots,\,0)$, local coordinates $x=(x_1\,\dots,x_{2n+1})$ defined in a neighborhood $U_1\times U_2$ of $p$, where $U_1\subseteq \mathbb{R}^{t-1}$ (resp. $U_{t-1}\subseteq \mathbb{R}^{2n+t}$) is an open set of $0\in \mathbb{R}^{t-1}$ (resp. $0\in \mathbb{R}^{2n+t}$) and $p\equiv 0\in \mathbb{R}^{2n+1}$, and a smooth function $\gamma=(\gamma_1,\dots,\,\gamma_t)\in\mathcal{C}^{\infty}(U_2,U_1)$ with $\gamma(0)=0$ such that $T=-\frac{\pr}{\pr x_{2n+1}}$ and all the properties for the local coordinates defined before hold.
The phase function $\Psi$ satisfies $\Psi(x,y)=-x_{2n+1}+y_{2n+1}+\hat\Psi(\mathring{x}'',\mathring{y}'')$, where $\hat\Psi(\mathring{x}'',\mathring{y}'')\in\mathcal{C}^\infty(U\times U)$ and $\Psi$ satisfies \eqref{eq:phase phi-}. 

\subsection{Hardy spaces}
\label{sec:Hardy}
We denote by $L^2_{(0,q)}(X)$, $q=0,1,\ldots,n$, the completion of $\Omega^{0,q}(X)$ with respect to $(\,\cdot\,|\,\cdot\,)$. We extend $(\,\cdot\,|\,\cdot\,)$ to $L^2_{(0,q)}(X)$ in the standard way. We extend
$\ddbar_{b}$ to $L^2_{(0,q)}(X)$ by
\[
\ddbar_{b}:{\rm Dom\,}\ddbar_{b}\subset L^2_{(0,q)}(X)\To L^2_{(0,q+1)}(X)\,,
\]
where ${\rm Dom\,}\ddbar_{b}:=\{u\in L^2_{(0,q)}(X);\, \ddbar_{b}u\in L^2_{(0,q+1)}(X)\}$ and, for any $u\in L^2_{(0,q)}(X)$, $\ddbar_{b} u$ is defined in the sense of distributions.
We also write
\[
\ol{\pr}^{*}_{b}:{\rm Dom\,}\ol{\pr}^{*}_{b}\subset L^2_{(0,q+1)}(X)\To L^2_{(0,q)}(X)
\]
to denote the Hilbert space adjoint of $\ddbar_{b}$ in the $L^2$ space with respect to $(\,\cdot\,|\,\cdot\, )$. There is a well-defined orthogonal projection
\begin{equation}
	S^{(q)}:L^2_{(0,q)}(X)\To{\rm Ker\,}\Box^q_b
\end{equation}
with respect to the $L^2$ inner product $(\,\cdot\,|\,\cdot\,)$ and let
\[
S^{(q)}(x,y)\in\mathcal{D}'(X\times X,T^{*0,q}X\boxtimes(T^{*0,q}X)^*)
\]
denote the distribution kernel of $S^{(q)}$. We will write $H^q(X)$ for $\ker\Box^q_b$ and for functions we simply write $H(X)$ to mean $H^0(X)$. The distributional kernel $S^{(q)}(x,y)$ was studied in \cite{hsiao}, before recalling an explicit description describing the oscillatory integral defining the distribution $S^{(q)}(x,y)$ we need to fix some notation. To begin, let us review the concept of the Hörmander symbol space. Let $D\subset X$ be a local coordinate patch with local coordinates $x=(x_1,\ldots,x_{2n+1})$. 
\begin{defn}
	For every $m\in\Real$, we denote with  
	\begin{equation} \label{eq:symbol} 
		S^m_{1,0}(D\times D\times\mathbb{R}_+,T^{*0,q}X\boxtimes(T^{*0,q}X)^*)\subseteq \mathcal{C}^\infty(D\times D\times\mathbb{R}_+,T^{*0,q}X\boxtimes(T^{*0,q}X)^*)
	\end{equation} 
	the space of all $a$ 
	such that, for all compact $K\Subset D\times D$ and all $\alpha, \beta\in\mathbb N^{2n+1}_0$, $\gamma\in\mathbb N_0$, 
	there is a constant $C_{\alpha,\beta,\gamma}>0$ such that 
	\[\abs{\pr^\alpha_x\pr^\beta_y\pr^\gamma_t a(x,y,t)}\leq C_{\alpha,\beta,\gamma}(1+\abs{t})^{m-\gamma},\ \ 
	\mbox{for every $(x,y,t)\in K\times\Real_+, t\geq1$}.\]
	For simplicity we denote with $S^m_{1,0}$ the spaces defined in \eqref{eq:symbol}; furthermore, we write 
	\[
	S^{-\infty}(D\times D\times\mathbb{R}_+,T^{*0,q}X\boxtimes(T^{*0,q}X)^*) :=\bigcap_{m\in\Real}S^m_{1,0}(D\times D\times\mathbb{R}_+,T^{*0,q}X\boxtimes(T^{*0,q}X)^*).
	\]
	Let $a_j\in S^{m_j}_{1,0}$, 
	$j=0,1,2,\ldots$ with $m_j\To-\infty$, as $j\To\infty$. 
	Then there exists $a\in S^{m_0}_{1,0}$ 
	unique modulo $S^{-\infty}$, such that 
	$$a-\sum^{k-1}_{j=0}a_j\in S^{m_k}_{1,0}(D\times D\times\mathbb{R}_+,T^{*0,q}X\boxtimes(T^{*0,q}X)^*\big)$$
	for $k=0,1,2,\ldots$. If $a$ and $a_j$ have the properties above, we write $a\sim\sum^{\infty}_{j=0}a_j$ in 
	$S^{m_0}_{1,0}$.   
\end{defn} 

It is known that the characteristic set of $\Box^q_b$ is given by 
\[
	\Sigma=\Sigma^-\cup\Sigma^+,\quad
	\Sigma^-=\set{(x,\lambda\omega_0(x))\in T^*X;\,\lambda<0},
\]
and $\Sigma^+$ is defined similarly for $\lambda>0$. We recall the following theorem (see~\cite[Theorem 1.2]{hsiao}).

\begin{thm}
	Suppose that the Levi form is non-degenerate and $\Box^q_b$ has $L^2$ closed range. Then, there exist continuous operators
	$S_-, S_+: L^2_{(0,q)}(X)\To{\rm Ker\,}\Box^q_b$ such that 
	\[	S^{(q)}=S_-+S_+,\quad	S_+\equiv0\ \ \mbox{if $q\neq n_+$}\]
	and
	\[{\rm WF'\,}(S_-)={\rm diag\,}(\Sigma^-\times\Sigma^-),\quad {\rm WF'\,}(S_+)={\rm diag\,}(\Sigma^+\times\Sigma^+)\ \ \mbox{if $q=n_-=n_+$},
	\]
	where ${\rm WF'\,}(S_-)=\set{(x,\xi,y,\eta)\in T^*X\times T^*X;\,(x,\xi,y,-\eta)\in{\rm WF\,}(S_-)}$, ${\rm WF\,}(S_-)$ is the wave front set of $S_-$ in the sense of H\"ormander. 
	
	Moreover, consider any small local coordinate patch $D\subset X$ with local coordinates $x=(x_1,\ldots,x_{2n+1})$, then 
	$S_-(x,y)$, $S_+(x,y)$ satisfy
	\[
	S_{\mp}(x, y)\equiv\int^{\infty}_{0}e^{i\varphi_{\mp}(x, y)t}s_{\mp}(x, y, t)dt\ \ \mbox{on $D$},
	\]
	with 
	\[s_{\mp}(x,y,t)\sim\sum^\infty_{j=0}s^j_{\mp}(x, y)t^{n-j}\text{ in }S^{n}_{1, 0}(D\times D\times\mathbb{R}_+\,,T^{*0,q}X\boxtimes(T^{*0,q}X)^*)\,,\]
	$s_+(x,y,t)=0$ if $q\neq n_+$ and $s^0_-(x,x)\neq 0$ for all $x\in D$. The phase functions $\varphi_-$, $\varphi_+$  satisfy
	\[\varphi_+, \varphi_-\in\mathcal{C}^\infty(D\times D),\ \ {\rm Im\,}\varphi_{\mp}(x, y)\geq0,\,\varphi_-(x, x)=0,\ \ \varphi_-(x, y)\neq0\ \ \mbox{if}\ \ x\neq y,\]
	and
	\[	d_x\varphi_-(x, y)\big|_{x=y}=-\omega_0(x), \ \ d_y\varphi_-(x, y)\big|_{x=y}=\omega_0(x), \,-\ol\varphi_+(x, y)=\varphi_-(x,y)\,. 	\]
\end{thm}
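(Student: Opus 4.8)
The plan is to construct $S_-$ and $S_+$ as microlocal parametrices for the projection near the two rays $\Sigma^-$ and $\Sigma^+$ of the characteristic variety, and then to show that their sum reproduces $S^{(q)}$ modulo a smoothing operator. Since the Levi form is non-degenerate, $\Box^q_b$ has $L^2$ closed range, so $S^{(q)}$ is a bounded orthogonal projection onto $\Ker\Box^q_b$. Off $\Sigma=\Sigma^-\cup\Sigma^+$ the operator $\Box^q_b$ is elliptic; applying elliptic regularity to $\Box^q_b S^{(q)}=0$ and to its adjoint, together with the idempotency of $S^{(q)}$, confines $\mathrm{WF}'(S^{(q)})$ to $\mathrm{diag}(\Sigma\times\Sigma)$. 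As $\Sigma^-$ and $\Sigma^+$ are disjoint closed conic sets, a conic microlocal cut-off splits $S^{(q)}\equiv S_-+S_+$ with the two summands supported near $\Sigma^-$ and $\Sigma^+$.

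First I would produce the phase functions. Writing $S_\mp(x,y)\equiv\int_0^\infty e^{i\varphi_\mp(x,y)t}s_\mp(x,y,t)\,dt$, the demand that $\Box^q_b S_\mp$ be smoothing forces $\varphi_\mp$ to solve the eikonal equation associated with the principal symbol $p$ of $\Box^q_b$ along $\Sigma^\mp$. The symbol $p$ vanishes on $\Sigma$ and its fundamental matrix there has nonvanishing eigenvalues fixed by the Levi eigenvalues; consequently the Hamilton--Jacobi problem has no real solution but admits an almost-analytic complex solution in the Melin--Sj\"ostrand calculus of complex-phase Fourier integral operators~\cite{ms}, unique modulo functions vanishing to infinite order on the diagonal. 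Positivity of the relevant quadratic form gives $\mathrm{Im}\,\varphi_\mp\ge 0$, together with $\varphi_-(x,x)=0$, $d_x\varphi_-|_{x=y}=-\omega_0(x)$, $d_y\varphi_-|_{x=y}=\omega_0(x)$, and the symmetry $-\overline{\varphi_+}=\varphi_-$; the sign of the fibre variable $t>0$ then matches $\varphi_-$ to $\Sigma^-$ and $\varphi_+$ to $\Sigma^+$. In the adapted coordinates of Section~\ref{sec:Geom} one checks that $\varphi_-$ agrees with the explicit quadratic normal form recorded there up to a third-order error.

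Next I would solve the transport equations for the amplitude. Substituting the ansatz and sorting $\Box^q_b(e^{i\varphi_\mp t}s_\mp)$ by homogeneity in $t$ yields a leading algebraic equation for $s^0_\mp$ followed by first-order transport equations for the lower-order terms, all solvable in $S^n_{1,0}$ and assembling by Borel summation into the stated expansion $s_\mp\sim\sum_j s^j_\mp t^{n-j}$. The leading equation is controlled by a model Kohn Laplacian on the Heisenberg group, a harmonic-oscillator type operator whose $(0,q)$-kernel is one-dimensional exactly when $q$ equals the number of negative eigenvalues (for $\Sigma^-$), respectively positive eigenvalues (for $\Sigma^+$), of the Levi form. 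This signature count is precisely what makes $s_+\equiv 0$ for $q\neq n_+$ and forces $s^0_-(x,x)\neq 0$ only in degree $q=n_-$.

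It remains to check that $S_-+S_+$ is, modulo smoothing, an orthogonal projection onto $\Ker\Box^q_b$: one verifies self-adjointness, $(S_-+S_+)^2\equiv S_-+S_+$, and $\Box^q_b(S_-+S_+)\equiv 0$, whence uniqueness of the $L^2$ projection gives $S^{(q)}\equiv S_-+S_+$. The main obstacle is the phase construction together with the signature bookkeeping: realizing $\varphi_\mp$ as an almost-analytic complex phase with the correct positivity and diagonal data, and carrying out the model-operator computation that determines in which degree $q$ each summand survives. By comparison, the recursive solution of the transport equations and the accompanying symbol estimates are routine once the phase and the leading amplitude are fixed.
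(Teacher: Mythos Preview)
The paper does not give its own proof of this theorem. It is introduced with the sentence ``We recall the following theorem (see~\cite[Theorem 1.2]{hsiao})'' and is treated as a background result quoted from~\cite{hsiao}; the only additional content is the Remark immediately after, which attributes the closed-range assertion to Kohn~\cite{Koh86} and refers to Chapter~8 of Part~I of~\cite{hsiao} for the local description of the phase. So there is no in-paper argument to compare your proposal against.

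That said, your outline is a reasonable summary of the strategy carried out in~\cite{hsiao}: ellipticity of $\Box^q_b$ off $\Sigma$ confines $\mathrm{WF}'(S^{(q)})$ to the diagonal over $\Sigma^-\cup\Sigma^+$; the phases $\varphi_\mp$ are obtained as almost-analytic solutions of the eikonal equation in the Melin--Sj\"ostrand framework~\cite{ms}, with the stated positivity and diagonal data; the amplitudes are built by solving transport equations and Borel-summing; and the signature analysis of the model (Heisenberg/harmonic-oscillator) operator determines in which degree each summand survives. One small imprecision: you write that the model computation ``forces $s^0_-(x,x)\neq 0$ only in degree $q=n_-$''; the theorem as stated here asserts $s^0_-(x,x)\neq 0$ without that qualifier, and the vanishing in the wrong degree is expressed instead through $S_\pm\equiv 0$ when $q\neq n_\pm$. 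Apart from that, your sketch matches the cited source, which is all the paper itself invokes.
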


\begin{rem}
	Kohn~\cite{Koh86} proved that if $q=n_-=n_+$ or $\abs{n_--n_+}>1$ then $\Box^q_b$ has $L^2$ closed range. For a description of the phase function in local coordinates see chapter 8 of part I in \cite{hsiao}.
\end{rem} 

Now we focus on the decomposition induced by the group action on $H^q(X)$. Fix $t\in \mathbb{T}$ and let $t^*:\Lambda^r_x(\Complex T^*X)\To\Lambda^r_{t^{-1}\circ x}(\Complex T^*X)$ be the pull-back map. Since $\mathbb{T}$ preserves $J$, we have $t^*:T^{*0,q}_xX\To T^{*0,q}_{g^{-1}\circ x}X$, for all $x\in X.$ Thus, for $u\in\Omega^{0,q}(X)$, we have $t^*u\in\Omega^{0,q}(X)$. Put $$\Omega^{0,q}(X)_{k\nu }:=\set{u\in\Omega^{0,q}(X);\, (e^{i\theta})^*u=e^{i\,k\langle \nu,\,\theta\rangle}u,\ \ \forall \theta\in \mathbb{R}^r}.$$
Since the Hermitian metric $\langle\,\cdot\,|\,\cdot\,\rangle$ on $\Complex TX$ is $\mathbb{T}$-invariant, the $L^2$ inner product $(\,\cdot\,|\,\cdot\,)$ on $\Omega^{0,q}(X)$ 
induced by $\langle\,\cdot\,|\,\cdot\,\rangle$ is $\mathbb{T}$-invariant. Let $u\in L^2_{(0,q)}(X)$ and $t\in \mathbb{T}$, we can also define $t^*u$ in the standard way. We introduce the following notation 
\[L^2_{(0,q)}(X)_{k\nu}:=\set{u\in L^2_{(0,q)}(X);\, (e^{i\theta})^*u=e^{i\,k\langle \nu,\,\theta\rangle}u,\ \ \forall \theta\in \mathbb{R}^r},\]
and put 
\[({\rm Ker\,}\Box^q_b)_{k\nu}:={\rm Ker\,}\Box^q_b\cap L^2_{(0,q)}(X)_{k\nu}\,.\] 
The equivariant Szeg\H{o} projection is the orthogonal projection 
\[S^{(q)}_{k\nu}:L^2_{(0,q)}(X)\To ({\rm Ker\,}\Box^q_b)_{k\nu}\]
with respect to $(\,\cdot\,|\,\cdot\,)$. Let $S^{(q)}_{k\nu}(x,y)\in\mathcal{D}'(X\times X,T^{*0,q}X\boxtimes(T^{*0,q}X)^*)$ be the distribution kernel of $S^{(q)}_{k\nu}$. The asymptotic expansion for the distributional kernel of the projector $S^{(0)}_{k\nu}$ was studied in \cite{Pao1} when $X$ is the quantizing circle bundle of a given Hodge manifold in Heisenberg local coordinates. Using similar ideas as in \cite{hsiaohuang} one can generalize the results for $(0,q)$-forms in the following theorem, we give a sketch of the proof, which is similar to the proof of \cite[Theorem 1.8]{hsiaohuang}, here the group $G$ in \cite{hsiaohuang} is $\mathbb{T}^{t-1}_{\nu}$ and the circle action in \cite{hsiaohuang} is given by the action of $\overline{\mathbb{T}^1_{\nu}}$.

Since the action is locally free we need to recall some notations from \cite{gh0}. Since the action is locally free $\mu^{-1}(0)/\overline{\mathbb{T}^1_{\nu}}$ is an orbifold, let us denote with $\pi\,:\,\mu^{-1}(0)\rightarrow \mu^{-1}(0)/\overline{\mathbb{T}^1_{\nu}}$ the projection. Furthermore the action of $\overline{\mathbb{T}^1_{\nu}}$ commutes with the one of $\mathbb{T}^{t-1}_{\nu}$, then we have a smooth locally free action of $\mathbb{T}^{t-1}_{\nu}$ on $\mu^{-1}(0)/\overline{\mathbb{T}^1_{\nu}}$. Given $y\in X$ and $g$ in the stabilizer $(\mathbb{T}^{t-1}_{\nu})_{\pi(y)}$, there exist $\lvert (\overline{\mathbb{T}^1_{\nu}})_y^1 \rvert$ elements $e^{ i \,\theta_{g,j}}\in \overline{\mathbb{T}^1_{\nu}}$ ($j=1,\dots,\lvert (\overline{\mathbb{T}^1_{\nu}})_y^1 \rvert$) such that 
\[g\circ y= e^{- i \,\theta_{g,j}}\circ y\,. \]
Thus, all the elements $(e^{ i \,\theta},\,g)\in \overline{\mathbb{T}^1_{\nu}}\times \mathbb{T}^{t-1}_{\nu} =\mathbb{T}$ satisfying 
\[	e^{ i \,\theta}\cdot g\circ y=y\,. \]
are of the form
$(e^{ i  \theta_{g,j}}, g)$ for each $g\in (\mathbb{T}^{t-1}_{\nu})_{\pi(y)}$.

\begin{thm} \label{thm:skn}
	Suppose that $\Box^q_b$ has $L^2$ closed range. Then, there exist continuous operators
	$S^-_{k\nu}, S^+_{k\nu}: L^2_{(0,q)}(X)\To({\rm Ker\,}\Box^q_b)_{k\nu}$ such that 
	\[	S^{(q)}_{k\nu}=S^-_{k\nu}+S^+_{k\nu},\quad	S_+\equiv0\ \ \mbox{if $q\neq n_+$}\]
	Suppose for simplicity that $q\neq n_+$. If $q\neq n_-$, then $S^{(q)}_{k\nu}\equiv O(k^{-\infty})$ on $X$.
	
	Suppose $q= n_-$ and let $D$ be an open set in $X$ such that the intersection $\mu^{-1}(i\,\mathbb{R}_+\cdot \nu)\cap D= \emptyset$. Then $S^{(q)}_{k\nu}\equiv O(k^{-\infty})$ on $D$.
	
	Let $p\in \mu^{-1}(i\,\mathbb{R}_+\cdot \nu)$ and let $U$ a local neighborhood of $p$ with local coordinates $(x_1,\dots\,x_{2n+1})$. Then, if $q= n_-$, for every fix $y\in U$, we consider $S^{(q)}_{k\nu}(x,y)$ as a $k$-dependent smooth function in $x$, then 
	\[S^{(q)}_{k\nu}(x,\,y)= \sum_{h\in G_{\pi(y)}} \sum_{j=1}^{\lvert S^1_x\rvert} e^{ik\, \theta_{h,j}} e^{i k\lVert\nu\rVert\,\Psi(x,\,y)}\,b(x,\,y,\,k\lVert\nu\rVert)+O(k^{-\infty})\,. \]
	for every $x\in U_y$, where $U_y$ is a small open neighborhood of $y$. The phase function $\Psi(x,\,y)$ is defined in local coordinates in the end of Section \ref{sec:Geom}, the symbol satisfies
	\[b(x,\,y,\,k\lVert\nu\rVert)\in S^{n+(1-t)/2}_{\mathrm{loc}}(1,\,U\times U,\,T^{*\,(0,q)}X\boxtimes(T^{*\,(0,q)})^*)\, \]
	and the leading term of $b$ is non-zero.
\end{thm}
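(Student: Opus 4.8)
The plan is to reduce everything to the microlocal description of the full Szeg\H{o} projector recalled above, to extract the isotype by an equivariant group average, and then to run a complex stationary phase argument exactly in the spirit of \cite{hsiaohuang}. First I would dispose of the vanishing statements. Since the Levi form is non-degenerate, Hsiao's theorem gives $S^{(q)}=S_-+S_+$ with $S_+\equiv 0$ for $q\neq n_+$ and, symmetrically, $S_-\equiv 0$ for $q\neq n_-$. Because the metric and $J$ are $\mathbb{T}^t$-invariant, $S^{(q)}$ commutes with the action, so $S^{(q)}_{k\nu}=P_{k\nu}\circ S^{(q)}$ where $P_{k\nu}$ is the projection onto the $k\nu$-isotype; hence under the standing hypothesis $q\neq n_+$ together with $q\neq n_-$ the operator $S^{(q)}$ is smoothing and $S^{(q)}_{k\nu}\equiv O(k^{-\infty})$ at once. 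It then remains to treat $q=n_-$, where only $S_-$ survives.

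For $q=n_-$ I would write the equivariant kernel as a group average against the character, $S^{(q)}_{k\nu}(x,y)=\int_{\mathbb{T}^t}e^{-ik\langle\nu,\theta\rangle}\,((e^{i\theta})^*S_-)(x,y)\,d\theta$, and insert $S_-(x,y)\equiv\int_0^\infty e^{i\varphi_-(x,y)t}s_-(x,y,t)\,dt$. Using the splitting $\mathfrak{t}=\imath\mathbb{R}\nu\oplus\ker\nu$ (so that $\mathbb{T}^t$ is isogenous to $\mathbb{T}^1_\nu\times\mathbb{T}^{t-1}_\nu$, the finite cover being absorbed by the reduction to a generically free action), I would carry the average out in the two stages that match the two phases of Section~\ref{sec:Geom}. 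Averaging first over the subtorus $\mathbb{T}^{t-1}_\nu=\exp(\imath\ker\nu)$, where $\langle\nu,\theta\rangle=0$, with $t$ playing the role of large parameter, produces by stationary phase in the $t-1$ group variables the $\mathbb{T}^{t-1}_\nu$-invariant kernel with the phase $\Phi_-$ and a symbol of order $n-(t-1)/2$; the critical equation $\partial_\theta\varphi_-=d_x\varphi_-\cdot\xi_X=0$ reads, on the diagonal and via \eqref{E:cmpm}, $\langle\mu(x),\xi\rangle=0$ for $\xi\in\ker\nu$, and it is clean precisely because the $\mathbb{T}^{t-1}_\nu$-action on $X_\nu$ is locally free and the ray is transversal to $\mu$ by \eqref{eq:trans}.

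Next I would take the $k$-th Fourier mode of the residual circle $\mathbb{T}^1_\nu$. The phase is now $\Phi_-((e^{i\theta_1})\circ x,y)\,t-k\|\nu\|\theta_1$, and the rescaling $t=k\|\nu\|\sigma$ exposes $k$ as a large parameter in front of $P(\sigma,\theta_1)=\|\nu\|\sigma\,\Phi_--\|\nu\|\theta_1$. The $\sigma$-equation forces $\Phi_-=0$, hence $x=y$ modulo the group since $\mathrm{Im}\,\Phi_-\geq 0$ and $\Phi_-$ vanishes only on the diagonal; the $\theta_1$-equation, using $d_x\varphi_-|_{x=y}=-\omega_0$ and \eqref{E:cmpm}, forces $\mu(y)$ to lie on the ray $\imath\mathbb{R}_+\cdot\nu$, i.e.\ $y\in X_\nu$. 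This single critical-point analysis yields everything: away from $X_\nu$ the phase $P$ has no real critical point, so integration by parts gives $S^{(q)}_{k\nu}\equiv O(k^{-\infty})$ on any $D$ with $\mu^{-1}(\imath\mathbb{R}_+\cdot\nu)\cap D=\emptyset$; near a point $p\in X_\nu$ I would apply the method of complex stationary phase (Melin--Sjöstrand, as in \cite{ms}) in the two variables $(\sigma,\theta_1)$, obtaining the reduced phase $\Psi(x,y)=-x_{2n+1}+y_{2n+1}+\hat\Psi(\mathring{x}'',\mathring{y}'')$ of Section~\ref{sec:Geom} and a classical symbol $b(x,y,k\|\nu\|)$. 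Its order is $n+(1-t)/2$: the invariant symbol has order $n-(t-1)/2$, the Jacobian $k\|\nu\|$ of the rescaling contributes $+1$, and the two-dimensional stationary phase contributes $-1$.

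Finally, the leading coefficient of $b(x,x,\|\nu\|)$ is read off from the stationary-phase formula as $s^0_-(p,p)$ times the reciprocal square root of the determinant of the transverse complex Hessian of $P$, so its non-vanishing is equivalent to the invertibility of that Hessian together with $s^0_-(p,p)\neq 0$. I expect the main obstacle to be precisely this verification: that the complex Hessian of $P$ in the directions transverse to the critical manifold over $X_\nu$ is non-degenerate with non-negative imaginary part, so that Melin--Sjöstrand applies and the leading term survives. This is exactly where the transversality \eqref{eq:trans} and the local freeness of $\mathbb{T}^1_\nu$ — equivalently, the non-degeneracy of the bilinear form $b$ on $\ker\nu_X\times\ker\nu_X$ — enter decisively, guaranteeing both the cleanness of the critical manifold and the invertibility of the transverse Hessian.
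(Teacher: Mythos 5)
Your proposal follows essentially the same route as the paper: both express $S^{(q)}_{k\nu}$ as a torus average against the character, split the average into two stages matching the decomposition $\mathfrak{t}=\imath\mathbb{R}\nu\oplus\ker\nu$ (first over $\mathbb{T}^{t-1}_{\nu}$, producing the invariant kernel concentrated on $\mu^{-1}(\mathbb{R}\nu)$, then the $k$-th Fourier mode of the residual circle $\mathbb{T}^1_{\nu}$), and conclude via the complex stationary phase machinery of Melin--Sj\"ostrand. The only difference is one of granularity: the paper invokes Theorems $1.5$ and $1.8$ of \cite{hsiaohuang} as black boxes at precisely the two stages where you carry out the stationary-phase analysis (critical-point equations, order bookkeeping, and the transverse Hessian non-degeneracy) by hand.
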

\begin{proof}
	Suppose $q=n_-$, on small local neighborhood $D$ of a point $p\in X_\nu$ we have
	\[ S_{k\nu}^{(q)}(x,\,y)=\frac{1}{(2\,\pi)^t}\int_{-\pi}^{\pi}\dots \int_{-\pi}^{\pi} e^{i k\,\langle \nu,\,\theta \rangle}\,S^{(q)}( x,\,e^{i \,\theta}\cdot y) \,\mathrm{d}\theta\, \]
	where $\theta=(\theta_1,\dots,\theta_t)$.
	It is easy to prove that the oscillatory integral has a rapidly decreasing asymptotic as $k\rightarrow +\infty$ far away from a local neighborhood of those elements $(e^{ i \,\theta},\,g)\in \overline{\mathbb{T}^1_{\nu}}\times \mathbb{T}^{t-1}_{\nu} =\mathbb{T}$ such that
	\[	e^{ i \,\theta}\cdot g\circ y=y\,. \]
	
	 We shall consider the case of a local neighborhood of the identity in $\mathbb{T}$, we set $\theta_t$ the variable for circle action of $\overline{\mathbb{T}^1_{\nu}}$ and $(\theta_1,\dots,\theta_{t-1})$ the variables for $\mathbb{T}^{t-1}_{\nu}$. We can then use local coordinates defined in \ref{sec:Geom}, we have
	\begin{align} \label{eq:sknu}
	S_{k\nu}^{(q)}(x,\,y)=\frac{1}{2\,\pi}\int_{-\pi}^{\pi} e^{i k\, \lVert\nu\rVert\theta_t -i\,kx_{2n+1}+i\,ky_{2n+1}}\,S_{\mathbb{T}^{t-1}}^{(q)}(\mathring{x},\,e^{i \,\theta_t}\cdot \mathring{y}) \,\mathrm{d}\theta_t 
	\end{align}
	where
	\begin{equation} \label{eq:st}
		S_{\mathbb{T}^{t-1}}^{(q)}({x},\,{y})=\frac{1}{(2\,\pi)^{t-1}}\int_{-\pi}^{\pi}\dots \int_{-\pi}^{\pi} \,S^{(q)}( x,\,e^{i \,\mathrm{diag}(\theta_1,\dots,\theta_{t-1})}\cdot y) \,\mathrm{d}\theta_1\,\dots \mathrm{d}\theta_{t-1}\,, 
	\end{equation}
	and $\mathring{x}=(x_1,\dots,x_n,0)$ and $\mathring{y}=(y_1,\dots,y_n,0)$. The action of $\mathbb{T}$ restricts to an action of $\mathbb{T}^{t-1}$ whose moment map is given by 
	\[\mu_{\vert\mathbb{T}^{t-1}}= p_{\nu}\circ \mu  \,,\quad \text{ where }  p_{\nu}\,:\, \mathfrak{t}^* \rightarrow  \mathfrak{t}_{\nu}^*  \]
	is the canonical projection onto the $t-1$-dimensional subspace $\nu^{\perp}$ in $\mathfrak{t}^*$. Since we are assuming $X_{\nu} \neq \emptyset$ we have that $0\in \mathfrak{t}_{t-1}^*$ does lie in the image of the moment map $\mu_{\vert\mathbb{T}^{t-1}}$. The proof follows in a similar way as in Theorem $1.8$ in \cite{hsiaohuang} where here we have the action of $G=\mathbb{T}^{t-1}$ with moment map $\mu_{\mathbb{T}^{t-1}}$ and
	the $k\nu$ Fourier components are the ones induced by the transversal $\overline{\mathbb{T}^1_{\nu}}$-action; notice that \cite[Assumption 1.7]{hsiaohuang} is satisfied. We shall also refer to \cite[Theorem 5.4]{gh0} for the locally free action case.
\end{proof}

For ease of notation we introduce a definition to say that an operator behaves micro-locally as equivariant Szeg\H{o} projector we just studied. For simplicity we assume $q=n_-$.

\begin{defn}[{Equivariant Szeg\H{o} type operator}]\label{def:szegotype}
	Suppose that $q=n_-$ and consider $H: \Omega^{0,q}(X)\To\Omega^{0,q}(X)$ be a continuous operator with distribution kernel $$H(x,y)\in\mathcal{D}'(X\times X,T^{*0,q}X\boxtimes(T^{*0,q}X)^*)\,.$$
	We say that $H$ is a complex Fourier integral operator of equivariant Szeg\H{o} type of order $n+(1-t)/2 \in\mathbb Z$ if $H$ is smoothing away $\mu^{-1}(i\,\mathbb{R}_+\cdot \nu)$ and for given $p\in \mu^{-1}(i\,\mathbb{R}_+\cdot \nu)$ let $D$ a local neighborhood of $p$ with local coordinates $(x_1,\dots\,x_{2n+1})$. Then, the distributional kernel of $H$ satisfies
	\[H_{k}(x,y)=  e^{i k\,\Psi(x,\,y)}\,a(x,\,y,\,k)+O(k^{-\infty})\quad \text{ on }D\]
	where $a\in S^{n+(1-t)/2}_{1,0}(D\times D\times\mathbb{R}_+,T^{*0,q}X\boxtimes(T^{*0,q}X)^*)$, and $\Psi$ is as in the end of Section \ref{sec:Geom}.
\end{defn}

For $k=0$, $H^{(q)}(X)_0$ is the space of $\mathbb{T}$-fixed vectors, by Theorem $1.5$ in \cite{hsiaohuang} we have $\dim H_0^{(q)}(X)< +\infty$ since when $0\notin \mu(X)$ the projector $S^{(q)}_{\mathbb{T}}$ is smoothing. Now, given $k_1, k_2\in \mathbb{Z}$ with $k_1, k_2\neq 0$, consider $f_1\in H^{(q)}(X)_{k_1\nu}$ and $f_2\in H^{(q)}(X)_{k_2\nu}$, we have $f_1\cdot f_2 \in H^{(q)}(X)_{(k_1+k_2)\nu}$; we can study $\dim H(X)_{k\nu}$ for $k$ large, we have 
\[\dim H^{(q)}(X)_{k\nu} = \int_X S_{k\nu}^{(q)}(x,\,x)\,\mathrm{dV}_X(x)\,. \]
These dimensions can be studied as $k\rightarrow +\infty$ by using the microlocal properties of the Szeg\H{o} kernel and Stationary Phase Lemma in a similar way as in Corollary $1.3$ in \cite{Pao1} we have $\dim H^{(q)}(X)_{k\nu}= O(k^{d+1-t})$. So, we have the following generalization of Theorem $A.3$ in \cite{gs}, where it is proved for spaces of functions $H(X)_{k\nu}$.

\begin{lem}
	If $0\notin \mu(X)$, then $H^q(X)_{k\nu}$ are finite dimensional.
\end{lem}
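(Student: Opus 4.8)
The plan is to prove that, for each fixed $k$, the equivariant projector $S^{(q)}_{k\nu}$ is a smoothing operator, and then to read off finite-dimensionality from a soft functional-analytic fact: an orthogonal projection on $L^2_{(0,q)}(X)$ whose Schwartz kernel is smooth on the compact manifold $X\times X$ has finite rank. Indeed such an operator maps $L^2_{(0,q)}(X)$ continuously into every Sobolev space, hence is compact by the Rellich lemma, and a compact orthogonal projection has finite-dimensional range. Once this is known one concludes
\[\dim H^q_{k\nu}(X)=\operatorname{Tr}S^{(q)}_{k\nu}=\int_X\operatorname{tr}S^{(q)}_{k\nu}(x,x)\,\mathrm{dV}_X(x)<+\infty.\]
Everything therefore reduces to the smoothness of $S^{(q)}_{k\nu}(x,y)$ on $X\times X$ for each fixed $k$.

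\textbf{Smoothness of the kernel.} When $q\notin\{n_-,n_+\}$ the operator $S^{(q)}$ is already smoothing, so there is nothing to prove; the interesting cases are $q=n_\mp$. Starting from the averaging formula
\[S^{(q)}_{k\nu}(x,y)=\frac{1}{(2\pi)^t}\int_{[-\pi,\pi]^t}e^{\imath k\langle\nu,\theta\rangle}\,S^{(q)}(x,e^{\imath\theta}\cdot y)\,\mathrm{d}\theta,\]
I would substitute the oscillatory representation $S_\mp(x,y)\equiv\int_0^\infty e^{\imath\varphi_\mp(x,y)t}s_\mp(x,y,t)\,\mathrm{d}t$ and study the phase $\varphi_\mp(x,e^{\imath\theta}\cdot y)\,t+k\langle\nu,\theta\rangle$ of the resulting $(\theta,t)$-integral. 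The singularities of $S^{(q)}$ are carried by the cone $t\to+\infty$ and sit over the graph $x=e^{\imath\theta}\cdot y$; there $d_y\varphi_-(x,y)\big|_{x=y}=\omega_0(y)$, so using \eqref{E:cmpm} and the $\mathbb{T}^t$-invariance of $\mu$ the $\theta$-gradient of the phase equals $t\,\mu(y)+k\nu$ (with the opposite sign for the $S_+$ piece). The hypothesis $0\notin\mu(X)$ now enters decisively: on the compact manifold $X$ it gives $\abs{\mu(y)}\ge c>0$, so the stationarity equation $t\,\mu(y)+k\nu=0$ has no solution as soon as $t>\abs{k}\,\norm{\nu}/c$.

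\textbf{Conclusion and the main difficulty.} Splitting the frequency integral at $t=T_0:=\abs{k}\,\norm{\nu}/c$, the part $t\le T_0$ is an oscillatory integral over a compact range of frequencies with smooth phase, hence contributes a smooth function of $(x,y)$; on $t>T_0$ the phase is non-stationary in $\theta$, so repeated integration by parts in $\theta$ yields bounds $O(t^{-N})$ for every $N$, and this tail is smooth as well. Hence $S^{(q)}_{k\nu}(x,y)\in\mathcal{C}^\infty(X\times X)$ for every fixed $k$, which is precisely the input required above. Equivalently, this can be read off the structure theorem proved earlier in this section: for fixed $k$ the kernel $e^{\imath k\norm{\nu}\Psi(x,y)}b(x,y,k\norm{\nu})+O(k^{-\infty})$ is a genuine smooth function, the oscillation living only in the asymptotic parameter $k$. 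I expect the microlocal bookkeeping of this step to be the main obstacle: one must control the wave front set of $S^{(q)}(x,e^{\imath\theta}\cdot y)$ jointly in $(x,y,\theta)$, check that the orbit directions $\xi_X$ produce no stationarity beyond the equation above, and run the integration by parts uniformly for both the $S_-$ and $S_+$ contributions.
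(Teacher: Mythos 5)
Your proposal is correct in substance, but it takes a genuinely different route from the paper. The paper never argues at fixed $k$: it treats $k=0$ by citing Theorem 1.5 of [HH] (when $0\notin\mu(X)$ the $\mathbb{T}^t$-invariant projector $S^{(q)}_{\mathbb{T}^t}$ is smoothing, so the fixed vectors form a finite-dimensional space), and for the ladder $k\nu$ it combines the equivariant kernel asymptotics of its Section~\ref{sec:Hardy} theorem with the trace identity $\dim H^{(q)}_{k\nu}(X)=\int_X S^{(q)}_{k\nu}(x,x)\,\mathrm{dV}_X(x)$ and the stationary phase lemma, obtaining the quantitative statement $\dim H^{(q)}_{k\nu}(X)=O(k^{d+1-t})$, which is finite for $\abs{k}$ large. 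You instead fix $k$ once and for all, average the non-equivariant Szeg\H{o} kernel against the character, and kill all frequencies $t>\abs{k}\norm{\nu}/c$ by non-stationary phase in $\theta$, using $\abs{\mu}\geq c>0$; this is precisely the mechanism of Guillemin--Sternberg's Theorem A.3, which the paper says this lemma generalizes, here adapted to $(0,q)$-forms via Hsiao's description of $S^{(q)}$. Your soft closing step (smooth kernel on compact $X\times X$ $\Rightarrow$ Hilbert--Schmidt $\Rightarrow$ compact projection $\Rightarrow$ finite rank) is standard and correct. The trade-off: the paper's route yields growth rates of the dimensions, but being asymptotic in $k$ it cleanly covers only $k=0$ and $\abs{k}$ large (the interpolating remark about products $f_1\cdot f_2$ is really a statement about functions, not $(0,q)$-forms), whereas your argument is uniform in $k$ and needs only the non-equivariant projector, not the equivariant expansion. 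Two details you rightly flag as needing care, and which you should carry out to make this airtight: the phases $\varphi_\mp$ are complex with ${\rm Im}\,\varphi_\mp\geq 0$, so the integration by parts in $\theta$ must use a non-stationary phase lemma for complex phases (Melin--Sj\"ostrand), applying the lower bound on the real $\theta$-gradient near the orbit diagonal and exponential decay or the wave-front relation ${\rm WF'}(S_\mp)=\mathrm{diag}(\Sigma^\mp\times\Sigma^\mp)$ away from it; and the oscillatory representation of $S_\mp$ is only local, so averaging over the whole orbit requires a partition of unity along the orbit. Both points are routine in this literature, and both your argument and the paper's presuppose the same closed-range hypothesis for $\Box^q_b$.
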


\section{Proof of Theorem \ref{thm:forms}}
\label{sec:forms}

In this section we shall explain how to prove asymptotic commutativity for quantization and reduction for spaces of $(0,q)$-forms. We recall that $X^{\mathrm{red}}_{\nu}$ is a CR orbifold whose Levi form is non-degenerate, with $n_--r$ negative eigenvalues; let us denote with  $S^{(q)}_{\mathrm{red}}$ the corresponding Szeg\H{o} kernel for $(0,q)$-forms whose $k$-th Fourier components are the one induced by the $\overline{\mathbb{T}^1_{\nu}}$-action on $X^{\mathrm{red}}_{\nu}$. We shall recall briefly its microlocal expression by \cite[Theorem 1.2]{gh}. Now, let us denote by $e^{i\theta}\cdot$ the transversal and CR locally free $\overline{\mathbb{T}^1_{\nu}}$-action and we take the Reeb vector field $R$ to be the vector field on $X$ induced by it. 

Let $q=n_--r$, and consider an open set $U\subset X$, $p\in U$, and an orbifold chart $(\widetilde{U},G_U)\rightarrow U$, we denote by $\widetilde{x}$ the coordinates on $\widetilde{U}$. For every $\ell\in\mathbb N$, put 
\[X_\ell:=\{x\in X;\, e^{i\theta}x\neq x, \theta\in [0,{2\pi}/{\ell}[, e^{i\,{2\pi}/{\ell}}x=x\}.\]
With the assumptions and notations used above, assume  that $p\in X_\ell$, for some $\ell\in\mathbb N$. We have as $k\To+\infty$,
\begin{equation}\label{eq:szegos1free}
	S^{(q)}_{\mathrm{red},k}(x,y)=\sum^{\ell-1}_{j=0}\sum_{g\in G_U} e^{\frac{2\pi kj}{\ell}}\, e^{ik\, \Psi(\widetilde{x}, e^{i\frac{2\pi j}{\ell}}\cdot g\cdot\widetilde{y})}b(\widetilde{x},\,e^{i\frac{2\pi j}{\ell}}\cdot g\cdot \widetilde{y},k)+O(k^{-\infty})
\end{equation}
where the phase function 
\begin{align*}
	\begin{split}
		&\Psi\in\,{C}^\infty(\widetilde U\times\widetilde U)\,,\qquad \Psi(\widetilde x,\widetilde x)=0,\ \ \mbox{for all $\widetilde x\in\widetilde U$},\\
		&\inf_{e^{i\theta}\in S^1}\set{\mathrm{dist}^2(\widetilde x,e^{i\theta}\,\widetilde y)}/C\leq {\rm Im\,}\Psi(\widetilde x,\widetilde y)\leq C\inf_{e^{i\theta}\in S^1}\set{\mathrm{dist}^2(\widetilde x,e^{i\theta}\,\widetilde y)}\,
	\end{split}
\end{align*}
for each $(\widetilde x,\widetilde y)\in\widetilde U\times\widetilde U$, $C>1$ is a constant; and the symbol satisfies 
\begin{equation*}
	b(\widetilde x,\widetilde y,k)\sim\sum^{+\infty}_{j=0}b_j(\widetilde x,\widetilde y)\,k^{n-j} \quad \text{in} \quad
			S^{n-(t-1)}(1;\widetilde U\times\widetilde U,T^{*0,q}X\boxtimes(T^{*0,q}X)^*)\,
\end{equation*}
and $b_0(\widetilde x,\widetilde x)$ is non zero.

Let us recall briefly why we have a local chart $(\widetilde{U},\,G_U)\rightarrow U$, for ease of notation let us put $G=\mathbb{T}^{t-1}_{\nu}$. We recall that, for every $x\in X_{\nu}$, by the slice Theorem a neighborhood of any orbit $\mathbb{T}^{t-1}_{\nu}\cdot x=x_0$ is equivariantly diffeomorphic to a neighborhood of the zero section of the associated principal bundle $$G\times_{G_x} N_x\,,$$ where $N_x$ is the normal space to $G\cdot x$ in $X_{\nu}$ and $G_x$ is the stabilizer of $x$ for the action of $G$, which is finite. Therefore, for some $\epsilon>0$ and for an open ball $B_{2{e}}(\epsilon)\subseteq N_x$, one has a homeomorphism $B_{2{e}}(\epsilon)/G_x \cong \overline{U}$ onto some neighborhood of $x_0$ in $X^{\mathrm{red}}_{\nu}$. 

Since $\nu^{\perp}_X:=\mathrm{val}(\nu^{\perp})$ is orthogonal to $$H_xX_{\nu}\cap J_x H_xX_{\nu}\quad \text{ and } \quad H_xX_{\nu}\cap J_x H_xX_{\nu}\subset (\nu^{\perp}_X)^{\perp_b}$$ for every $x\in X_{\nu}$, we can find a $\mathbb{T}$-invariant orthonormal basis $\{Z_1,\,\dots,\, Z_n\}$ of $T^{*\,0,1}X$ on $X_{\nu}$ such that for each $j,\,k=1,\dots n$,
\[L_x(Z_j(x),\,\overline{Z}_j(x))= \delta_{jk}\,\lambda_j(x)\,,  \]
where
\[ Z_j(x)\in (\nu^{\perp}_{X,x}+i J \nu^{\perp}_{X,x})\text{ for each $j=1,\,\dots,\, t-1$ }\,, \]
\[  Z_j(x)\in H_xX_{\nu}\cap J_x(H_xX_{\nu}) \text{ for each $j=r,\,\dots,\, n$ }\,. \]
Let $\{Z_1^*,\,\dots,\, Z_n^*\}$ denote the orthonormal basis of $T^{*\,0,1}X$ on $X_{\nu}$, dual to $Z_1,\,\dots,\, Z_n$. Fix $s = 0,\, 1,\, 2,\dots,\, n-r+1$. For $x \in X_{\nu}$, put
\[B_x^{*\,0,s}X=\left\{\sum_{r\leq j_1<\dots<j_s\leq n} a_{j_1,\dots,j_s}Z_{j_1}^*,\,\dots, Z_{j_s}^*;\,a_{j_1,\dots,j_s}\in\mathbb{C},\, \right\} \]
and let $B^{*\,0,s}X$ be the vector bundle of $X_{\nu}$ with fibre $B_x^{*\,0,s}$, $x\in X_{\nu}$. Let $C^{\infty}(X_{\nu}, B^{*\,0,s}X)^{\mathbb{T}}$ denote the set of all $\mathbb{T}$-invariant sections of $X_{\nu}$ with values in $B_x^{*\,0,s}X$; let
\[\iota_{\mathrm{red}} \,:\,C^{\infty}(X_{\nu}, B^{*\,0,s}X)^{\mathbb{T}} \rightarrow \Omega^{0,s}(X^{\mathrm{red}}_{\nu}) \]
be the natural identification. Before defining the map between $H^q(X)_{k\nu}$ and $H^{q-r}(X^{\mathrm{red}}_{\nu})_{k}$ we need one more piece of notation. We can assume that $\lambda_1 < 0, \cdots, \lambda_{r} < 0$ and also $\lambda_{t}< 0,\dots ,\, \lambda_{n_--r+t-1} < 0$. For $x\in X_{\nu}$, set
\[\hat{\mathcal{N}}(x,n_-)=\{c\,Z_t^*\wedge\,\cdots\,\wedge Z_{n_--r+t-1}^*\,,c\in \mathbb{C} \} \]
and let $\hat{p}\,:\,\mathcal{N}(x,n_-)\rightarrow \hat{\mathcal{N}}(x,n_-)$ to be
\[\hat{p}(c\, Z_1^*\wedge\,\cdots\,\wedge Z_r^*\wedge Z_t^*\wedge\,\cdots\,\wedge Z_{n_--r+t-1}^* ):= c\,Z_t^*\wedge\,\cdots\,\wedge Z_{n_--r+t-1}^*  \]
where $c$ is a complex number. Put $\iota_{\nu} \,:\, X_{\nu} \rightarrow X$ be the natural inclusion and let $\iota_{\nu}^* \,:\, \Omega^{0,q}(X)\rightarrow \Omega^{0,q}(X_{\nu})$ be the pull-back of $\iota_{\nu}$.

Let $q = n_-$; now, inspired by \cite{hsiaohuang}, we define the map 
\[\sigma_{k\nu}\,:\,H^{q}(X)_{k\nu}\rightarrow H^{q-r}(X^{\mathrm{red}}_{\nu})_k \]
given by
\begin{equation} \label{eq:sigma}
	\sigma_{k\nu}(u):=(k\nu)^{(t-1)/4}\,S^{(q-r)}_{\mathrm{red},k}\circ\iota_{\mathrm{red}}\circ \hat{p} \circ \tau_{x,n_-}\circ e \circ \iota^*_{\nu} \circ S^{(q)}_{k\nu}(u) \,, 
\end{equation}
here, $e(x)$ is a $\mathbb{T}$-invariant smooth function on $X_{\nu}$ that can be found explicitly. Notice that operators $S^{(q-r)}_{\mathrm{red},k}$ and $S^{(q)}_{k\nu}$ appearing in \eqref{eq:sigma} are known explicitly. $S^{(q-r)}_{\mathrm{red},k}$ is the $k$-th Fourier component of the standard Szeg\H{o} kernel for the CR manifold $X^{\mathrm{red}}_{\nu}$, on the other hand $S^{(q)}_{k\nu}$ is described in Section \ref{sec:Hardy}. 

Before stating the next theorem we shall specialize the local coordinates defined in Section \ref{sec:Geom}. Let us consider $p\in X_{\nu}$ and let $x=(x_1,\,\dots,\,x_{2n+1})$ be the local coordinates in an open neighborhood $U$ of $p$ defined in Section \ref{sec:Geom}. We may assume that $U=U_1\times U_2 \times U_3\times U_4$, where $U_1\subset \mathbb{R}^{t-1}$, $U_2\subset \mathbb{R}^{t-1}$ are open sets of $0\in \mathbb{R}^{t-1}$, $U_3\subset \mathbb{R}^{2n-2(t-1)}$ is an open set of $0\in \mathbb{R}^{2n-2(t-1)}$ and $U_4$ is an open set of $0\in \mathbb{R}$. From now on, we can identify $U_2$ with
\[\{(0,\,\dots,\,0,\,x_{t},\dots,\,x_{2(t-1)},\,0,\,\dots,\,0)\in U\,:\,(x_t,\,\dots,\,x_{2(t-1)})\in U_2\}\]
and $U_3$ with
\[\{(0,\,\dots,\,0,\,x_{2t-1},\dots,\,x_{2n},\,0)\in U\,:\,(x_t,\,\dots,\,x_{2n})\in U_3\}\,.\]
For a given orbifold chart $(\widetilde{U},\,G_U)\rightarrow U$ we write $\tilde{U}_i$ for the corresponding open sets in $\tilde{U}$. Eventually we recall that $\widetilde{x}'':=(x_{2t-1},\,\dots,\,x_{2n+1})$. 

\begin{thm} \label{thm:sigknu}
	Under the assumptions above, if $x\notin X_{\nu}$, then for every sufficiently small open set $D$ of $x$ with $\overline{D}\cap X_{\nu}=\emptyset$, we have $\sigma_{k\nu}=O(k^{-\infty})$ on $X^{\mathrm{red}}_{\nu}\times D$.
	
	Let $\pi\,:\,X_{\nu} \rightarrow X^{\mathrm{red}}_{\nu}$ the projection. If $x,y\in X_{\nu}$ and $\pi(x)\neq \pi(e^{i\theta}\cdot y)$ for every $e^{i\theta}\in \overline{\mathbb{T}^1_{\nu}}$, then there exist open set $U$ of $\pi(x)$ in $X^{\mathrm{red}}_{\nu}$ and $V$ of $y$ in $X$ such that $\sigma_{k\nu}=O(k^{-\infty})$ on $U\times V$.
	
	Eventually, let $p\in X_{\nu}\cap X_{\ell}$, using the local coordinates defined above, we have 
	\[\sigma_{k\nu}(\tilde{x}'',y'')= \sum^{\ell-1}_{j=0}\sum_{g\in G_U} e^{\frac{2\pi kj}{\ell}} e^{ik\, \Psi(\widetilde{x}, \,e^{\frac{2\pi j}{\ell}}\cdot g\cdot\widetilde{y})} \,\alpha(\widetilde{x},\, e^{\frac{2\pi j}{\ell}}\cdot  g\cdot \widetilde{y},k)+O(k^{-\infty})\quad \text{on }(\tilde{U}_3\times \tilde{U}_4)\times \tilde{U}\]
	where
	\[\alpha(\tilde{x}'',y'',k)\in S^{n-\frac{3}{4}(t-1)}_{\mathrm{loc}}(1;\,(\tilde{U}_3\times \tilde{U}_4)\times \tilde{U},\,T^{*,(0,q-r)}X^{\mathrm{red}}_{\nu}\boxtimes (T^{*,(0,q)}X)^*) \]
	and the leading term $\alpha_0$ in the expansion of $\alpha$ can be computed explicitly along the diagonal.
\end{thm}
\begin{proof}
	Since, by Theorem \ref{thm:skn}, $S^{(q)}_{k\nu}$ has rapidly decreasing asymptotics as $k$ goes to infinity away $X_{\nu}$, we obtain that $\sigma_k=O(k^{-\infty})$ on $X^{\mathrm{red}}_{\nu}\times D$.
	
	Now, let us prove the second statement. If $x,y\in X_{\nu}$ and $\pi(x)\neq \pi(e^{i\theta}\cdot y)$ for every $e^{i\theta}\in \overline{\mathbb{T}^1_{\nu}}$, since $S^{(q-r)}_{\mathrm{red},k}$ is smoothing away from diagonal, then there exist open set $U$ of $\pi(x)$ in $X^{\mathrm{red}}_{\nu}$ and $V$ of $y$ in $X$ such that $\chi\,S^{(q-r)}_{\mathrm{red},k}\eta = O(k^{-\infty})$ on $X^{\mathrm{red}}_{\nu}$,
	where ${\chi}\in C^{\infty}_0(U)$ and $\eta\in C^{\infty}_0(V)$. Furthermore by \eqref{eq:sknu}, we see that, for sufficiently small neighborhoods of $x$ and $y$, we can integrate by parts in $\mathrm{d}\theta_t$ and see that $S^{(q)}_{k\nu}$ has rapidly decreasing asymptotic. The second statements follows. 
	
	Eventually, let $p\in X_{\nu}$ and consider a small open neighborhood $U$ with coordinates defined as above. Let $\chi\in C^{\infty}_0(\tilde{U}_3)$ be a $G_U$ invariant bump function and assume $\chi=1$ on some neighborhood of $p$, it extends naturally to a function on $\mathbb{T}\cdot \tilde{U}_3$.  Let us put $U^{\sharp} = \{\pi(x)\,:\,x\in U\}$. Let  $\eta\in C^{\infty}_0(X^{\mathrm{red}}_{\nu})$ such that $\eta=1$ on some neighborhood of $U^{\sharp}$ and $$\mathrm{supp}(\eta)\subset \{\pi(x)\in X^{\mathrm{red}}_{\nu} \,:\, x\in X_{\nu},\,\chi(x)=1 \}\,.$$ Thus, we get
	\[\eta\,\sigma_{k\nu}\sim (k\nu)^{(t-1)/4}\,\eta\,S^{(q-r)}_{\mathrm{red},k}\circ\iota_{\mathrm{red}}\circ \hat{p} \circ \tau_{x,n_-}\circ e \circ \iota^*_{\nu} \circ \chi \,S^{(q)}_{k\nu} \,.  \]
	Now, we can compose the operators and we can use the complex stationary phase formula of \cite{ms}, we get the theorem.
\end{proof}

In fact, $\sigma_{k\nu}$ is an isomorphism for $k$ large if we can prove that  
\[\sigma_{k\nu}\,:\, H^{q}(X)_{k\nu}\rightarrow H^{q-r}(X^{\mathrm{red}}_{\nu})_k\quad\text{ and }\quad \sigma_{k\nu}^*\,:\, H^{q-r}(X^{\mathrm{red}}_{\nu})_k\rightarrow H^{q}(X)_{k\nu} \] 
are injective for $k$ large. Notice that, by Theorem \ref{thm:sigknu}, we have
\begin{align} \label{eq:sigmaknustar}
	\sigma_{k\nu}^*({x}'',\tilde{y}'')= \sum^{\ell-1}_{j=0}\sum_{g\in G_U} e^{\frac{2\pi kj}{\ell}} e^{-i k\,\overline{\Psi({x}'',e^{\frac{2\pi j}{\ell}}\cdot g\cdot\tilde{y}'')}} \,\beta({x}'',\,e^{\frac{2\pi j}{\ell}}\cdot g\cdot\tilde{y}'',k)+O(k^{-\infty})
\end{align}
on $\tilde{U}\times (\tilde{U}_3\times \tilde{U}_4) $ where we can check $\beta_0(\tilde{x}'',\tilde{x}'')=\alpha_0(\tilde{x}'',\tilde{x}'')$. 

The injectivity of the map $\sigma_{k\nu}$ is a consequence of the following theorem which is an adaptation of proof of the main theorem in \cite{hsiaohuang}.

\begin{thm} There exists a Fourier integral operator $R_k$ of \textit{equivariant Szeg\H{o} type} of degree $n-(t-1)/2$ such that
\begin{equation} \label{eq:final}
	\sigma_{k\nu}^*\sigma_{k\nu} \equiv c_0 \,(1+R_{k})\,S^{(q)}_{k\nu} 
\end{equation}
where $c_0$ is a positive constant and $1+R_{k}\,:\,\Omega^{0,q}(X)\rightarrow \Omega^{0,q}(X)$ is an injective Fourier integral operator of \textit{equivariant Szeg\H{o} type}. 
\end{thm}
\begin{proof} By Theorem \ref{thm:sigknu} and equation \eqref{eq:sigmaknustar} we can compose $\sigma_{k\nu}^*$ and $\sigma_{k\nu}$ using the complex stationary phase formula of \cite{ms}. We can pick $e$ in the definition of $\sigma_{k\nu}$ so that the leading term of the symbol of $\sigma_{k\nu}^*\sigma_{k\nu}$ agrees with the one of $S^{(q)}_{k\nu}$ along the diagonal. Thus we can find an operator 
	\[R_k(x,\,y)= \sum^{\ell-1}_{j=0}\sum_{g\in G_U} e^{\frac{2\pi kj}{\ell}} e^{ik\,\Psi(x'',e^{\frac{2\pi j}{\ell}}\cdot g\cdot y'')}\,r(x'',\,e^{\frac{2\pi j}{\ell}}\cdot g\cdot y'',k)+O(k^{-\infty})\quad \text{on }\tilde{U}\times \tilde{U} \]
such that 
\[r\in S^{n-(t-1)/2}(1;\,\tilde{U}\times \tilde{U},\,T^{*(0,q)}X\boxtimes (T^{*(0,q)}X)^*) \]
and
\[\lvert r_0(x,\,y) \rvert \leq C\lvert (x,y)-(x_0,x_0) \rvert \]
for all $x_0\in X_{\nu}\cap \tilde{U}$, where $C>0$ is a constant.

The injectivity of the operator $1+R_{k}$ follows from direct computations of the leading symbol of $R_k$ which vanishes at $\mathrm{diag}(X_{\nu} \times X_{\nu} )$. In fact, as a consequence of this, by Lemma $6.7$ and Lemma $6.8$ in \cite{hsiaohuang} we have that $\lVert R_k u \lVert \leq \epsilon_k \lVert u\rVert$ for all $u \in \Omega^{0,q}(X)$, for all $k\in \mathbb{N}$ where $\epsilon_k$ is a sequence with $\lim_{k\rightarrow +\infty } \epsilon_k = 0$; this in turn implies, if $k$ is large enough, that the map $1+R_{k}$ is injective.
\end{proof}

Analogously, the injectivity of the map $\sigma_{k\nu}^*$ follows by studying $\sigma_{k\nu}\sigma_{k\nu}^*$. We don't repeat the proof here since it is an application of the stationary phase formula for Fourier integral operators of complex type, see \cite{ms}, and it follows in a similar way as above.

\bigskip
\textbf{Acknowledgments:} I express my gratitude to the referees for their valuable suggestions and corrections, which have contributed to the improvement of this paper. Furthermore, I am indebted to Roberto Paoletti for a remark on the previous version of this work.

I would like to acknowledge the support received from the National Center for Theoretical Sciences in Taiwan, where this project was initiated during my postdoctoral fellowship.

Additionally, I extend my thanks to the Mathematics Institute of Universität zu K\"oln for their hospitality throughout my scholarship: the author is supported by INdAM (Istituto Nazionale di Alta Matematica) foreign scholarship.

\end{document}